\theoremstyle{plain}
\newtheorem{theorem}{Theorem}
\theoremstyle{definition}
\newtheorem{defn}[theorem]{Definition}
\newtheorem{claim}[theorem]{Claim}
\newtheorem{assum}[theorem]{Assumption}
\theoremstyle{remark}
\newcommand{\bbm}{\begin{bmatrix}}
\newcommand{\ebm}{\end{bmatrix}}
\begin{document}

\title[Multimodal Sampling via Approximate Symmetries]{Multimodal Sampling via Approximate Symmetries}

\author[]{Lexing Ying} \address[Lexing Ying]{Department of Mathematics, Stanford University,
  Stanford, CA 94305} \email{lexing@stanford.edu}

\thanks{The author thanks Professors Persi Diaconis and Wing Hung Wong for suggestions and comments.}

\keywords{Multimodal distributions, symmetries, annealed importance sampling, tempered transition, Ising models.}

\subjclass[2010]{82B20,82B80.}

\begin{abstract}
  Sampling from multimodal distributions is a challenging task in scientific computing. When a distribution has an exact symmetry between the modes, direct jumps among them can accelerate the samplings significantly. However, the distributions from most applications do not have exact symmetries. This paper considers the distributions with approximate symmetries. We first construct an exactly symmetric reference distribution from the target one by averaging over the group orbit associated with the approximate symmetry. Next, we can apply the multilevel Monte Carlo methods by constructing a continuation path between the reference and target distributions. We discuss how to implement these steps with annealed importance sampling and tempered transitions. Compared with traditional multilevel methods, the proposed approach can be more effective since the reference and target distributions are much closer. Numerical results of the Ising models are presented to illustrate the efficiency of the proposed method.
\end{abstract}

\maketitle

\section{Introduction}\label{sec:intro}


Sampling from multimodal distributions is a challenging task in scientific computing. Standard Markov Chain Monte Carlo (MCMC) methods face difficulty since they tend to mix efficiently locally but fail to introduce moves between different modes. Over the years, several classes of methods have been introduced to address this issue.

The first class is the multilevel or tempering methods, where the target distribution is associated with a fixed temperature. The high-temperature version of the target distribution is typically easy-to-sample. This class constructs a continuation path between the high-temperature version and the target distribution. The actual sampling is conducted via Metropolis-Hasting moves along the continuation path. Well-known examples of this type include simulated tempering, parallel tempering, tempered transitions, and annealed importance sampling. The efficiency of these methods depends on the acceptance rate of the MCMC moves or the variance of the importance weights. To keep these under control, the continuation path sometimes needs to be long and discretized with many intermediate temperatures. This could result in high computational and storage costs. In the rest of this paper, we will refer to these methods as continuation methods because the concept of connecting an easy-to-sample distribution with the target distribution is more general than tempering.

A second class of methods uses the symmetry of the target distribution. For many spin systems, an explicit symmetry exists between the spin values. One can accelerate the mixing by introducing a spin-value flip over a large cluster of spins. This is, for example, used partly in the Swendsen-Wang algorithm, where the global move flips between $+1$ and $-1$ configurations. Another example is the Hamiltonian Monte Carlo (or hybrid Monte Carlo), where the Hamiltonian flow serves as a symmetry that preserves the distribution in the phase space.

Though distributions with exact symmetry are important for analytical studies and theoretical understanding, many distributions from practical applications often fail to have exact symmetries. A natural question is whether it is still possible to leverage the power of symmetries there.

A key observation is that many high-dimensional multimodal distributions in computational physics can often be {\em approximately symmetric}. This is quite common at relatively low temperatures because one of the modes will otherwise dominate, effectively reducing the distribution to a single mode. For a distribution to be genuinely multimodal at low temperatures, there is often an approximate symmetry behind the scene.

\subsection{Contribution} This paper introduces a method for sampling from multimodal distributions with approximate symmetries. First, we identify the approximate symmetry. Second, we construct a symmetric reference multimodal distribution by averaging the log density over the group orbit associated with the approximate symmetry. Third, by applying the continuation method between the symmetric reference distribution and the target one, we obtain efficient algorithms for the target multimodal distribution. Figure \ref{fig:idea} illustrates these main steps. 
\begin{figure}[h!]
  \centering \includegraphics[scale=0.35]{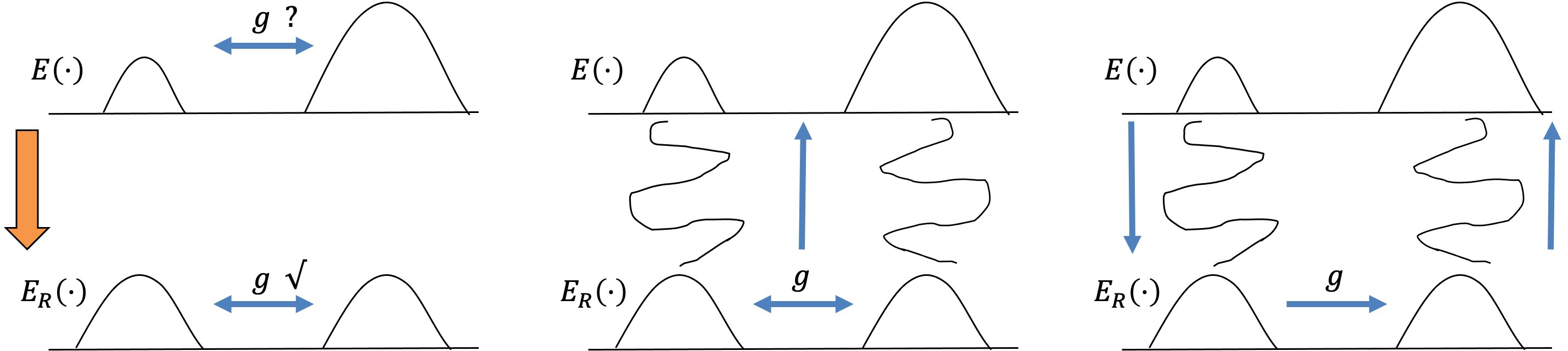}
  \caption{The main steps of the algorithms. Left: $G$ is the group of symmetries. $G$ is an approximate symmetry for the distribution with log density $E(s)$. Through averaging over the group orbit, we construct a reference log density $E_R(s)$, which is symmetric under $G$. Middle: annealed importance sampling (AIS), where $g\in G$ allows efficient sampling from the reference $E_R(s)$. Right: tempered transition (TT), where $g\in G$ allows for global jumps within a TT move.}
  \label{fig:idea}
\end{figure}

Compared with the existing continuation method that uses a high-temperature distribution as the reference, the proposed method can be more efficient. The reason is that the distance between the target distribution and our reference distribution constructed from group action averaging is often much shorter. This results in a few intermediate steps along the continuation path and less computational cost. Figure \ref{fig:cmp} offers a cartoon illustration of this point.

\begin{figure}[h!]
  \centering
  \includegraphics[scale=0.3]{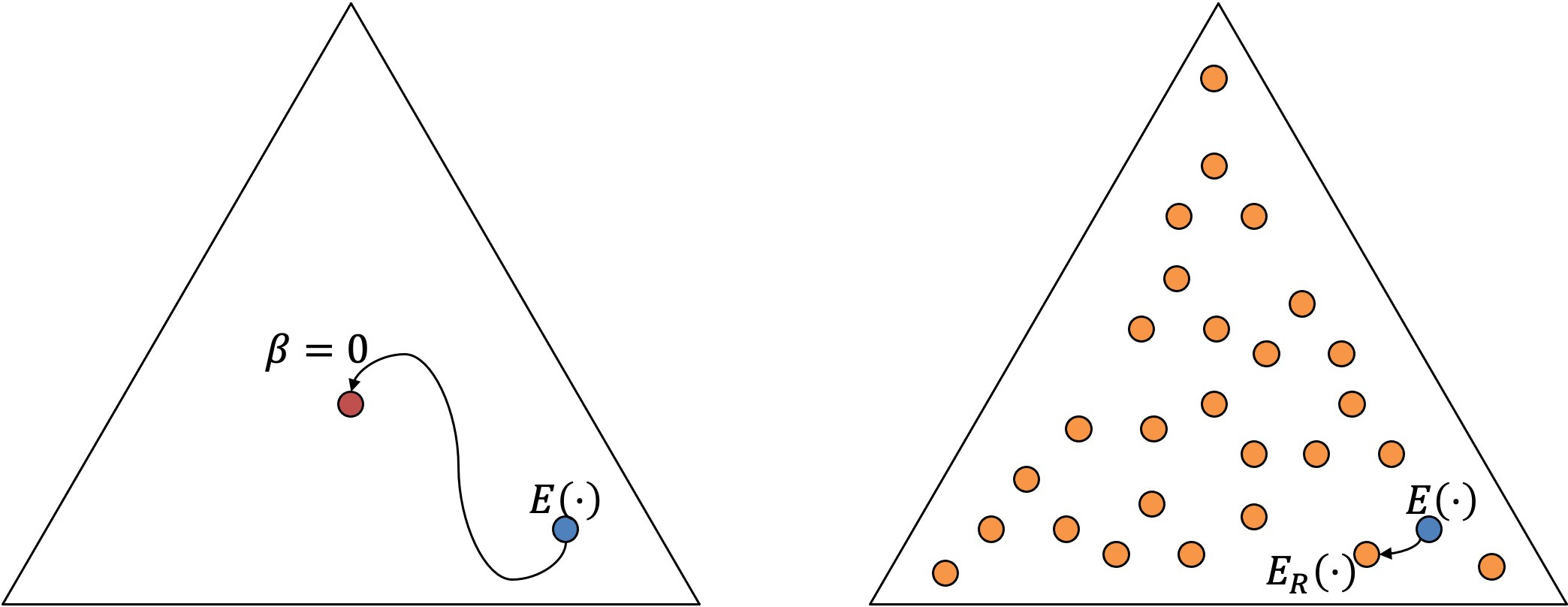} 
  \caption{The triangle stands for the probability simplex defined over the state space. Left: the usual continuation method. The usual reference distribution with $\beta=0$ has a lot of symmetries and is easy to sample. However, it is often far from the target distribution $E(s)$, so the continuation path is long. Right: the proposed method. Based on the approximate symmetry $G$ of the $E(s)$, we identify a nearby reference distribution $E_R(s)$ with $G$ as the exact symmetry. The orange dots are the distributions that satisfy the exact symmetry in the group $G$ ($\beta=0$ is merely one of them). The continuation path from $E(s)$ to $E_R(s)$ is much shorter than the one from $E(s)$ to the $\beta=0$ distribution.}
  \label{fig:cmp}
\end{figure}

\subsection{Related work}
The field of Monte Carlo methods is vast. The standard references for advanced Monte Carlo strategies are \cite{liu2001monte,liang2011advanced}. In what follows, we only touch on the references closely related to our work.

In the class of the continuation (or tempering/annealing) methods, several well-known examples include simulated tempering \cite{marinari1992simulated,geyer1995annealing}, parallel tempering \cite{geyer1991markov,hukushima1996exchange}, simulated tempering \cite{neal1996sampling}, and annealed importance sampling \cite{neal2001annealed}.

Many methods take advantage of the exact symmetries and group actions. For example, within the clustering approach of spin systems, the Swendsen-Wang and Wolff algorithms \cite{swendsen1987nonuniversal,wolff1989collective,edwards1988generalization} utilizes the spin-value symmetry in their cluster moves across all spatial scales. As we mentioned earlier, Hamiltonian or hybrid Monte Carlo \cite{duane1987hybrid,neal1996monte} is another example that utilizes the symmetry of the Hamiltonian flow. The Hamiltonian Monte Carlo approach has also been extended to discrete state spaces in \cite{diaconis2000analysis,kapfer2017irreversible}. Group action has also been applied in \cite{liu2000generalised} to improve Gibbs sampling.

Mode-jumping (or mode-hopping) is another class of methods for multimodal sampling \cite{tjelmeland2001mode,andricioaei2001smart,sminchisescu2003mode,tjelmeland2004use,sminchisescu2007generalized,ibrahim2009new}. Assuming that the modes are pre-identified, such a method constructs an easy-to-sample local approximation around each mode and uses their sum (typically a mixture of Gaussians) to approximate the overall target distribution. This approximation is then utilized in the Metropolis-Hasting step for sampling. Compared to the methods based on symmetries and approximate symmetries, this class of methods typically requires knowing more information about the modes. The numerical performance depends critically on the accuracy of the local approximation, which can be a non-trivial task.

Another important class is the population-based methods. Two well-known examples of this class are the Snooker algorithm \cite{gilks1994adaptive} and the evolutionary Monte Carlo \cite{liang2001evolutionary}. A recent development emphasizing affine invariance can be found in \cite{goodman2010ensemble}.

\subsection{Content}

The result of the paper is organized as follows. Section \ref{sec:method} describes the general framework and shows how to combine it with two continuation methods (annealed importance sampling and tempered transition). Section \ref{sec:ex} focuses on two examples from the Ising model. Finally, Section \ref{sec:disc} contains some discussions for future work.

\section{Method}\label{sec:method}

\subsection{Approximate symmetry}
We first describe how exact symmetry can help accelerate the mixing and then how to extend it to multimodal distributions with approximate symmetries.

Let $S$ be state space, $p(s)$ be the target probability at $s$, and $E(s)$ be its log density (up to a normalization constant), i.e., $p(s) \propto \exp(E(s))$.  We define symmetry via a group $G$ that acts on the state space on the left. Following the standard notations in group theory (see \cite{vinberg2003course} for example), we use $gs$ to stand for the state obtained by acting $g$ on $s$. 

\begin{defn}
  The distribution $p(x)\propto \exp(E(s))$ is symmetric with respect to $G$ if
  \begin{equation}
    E(gs) = E(s) \label{eq:Egs=Es}
  \end{equation}
  for any $g\in G$ and $s\in S$.
\end{defn}

This paper assumes that the symmetry can map from one mode to another. For simplicity, let us assume that $G$ is a discrete group, though the Lie group case is similar. In the simplest case, $G=\{e, g\}$ with $g^2 = e$. Given $G$, one can define a Markov chain with the transition kernel $T(s,t)$ where
\begin{equation}
  T(s,t) =
  \begin{cases}
    \frac{1}{|G|-1} & t=gs \;\text{for some}\; g\not=e,\\
    0               & \text{otherwise}.
  \end{cases}\label{eq:Tgroup}
\end{equation}
The claim is that $T$ satisfies the detailed balance \cite{liu2001monte}, i.e.,
\[
p(s) T(s,t) = p(t) T(t,s).
\]
To see this, notice that when $t$ cannot be written as $gs$ for any $g$, both sides are zero. When $t=s$, both sides are zero too. Finally, when $t=gs$ for some $g\not=e$, $p(s) \cdot \frac{1}{|G|-1} = p(t) \cdot \frac{1}{|G|-1}$ following \eqref{eq:Egs=Es}.

The more interesting case is when the symmetry in $G$ is only approximate. A distribution $p(x)\propto \exp(E(s))$ is {\em approximately symmetric} with respect to $G$ if
\begin{equation}
  E(gs) \approx E(s) \label{eq:EgsappEs}
\end{equation}
for any $g\in G$ and $s\in S$. The following assumption is important for the proposed approach.

\begin{assum}\label{as:transitive}
  The action of $G$ is transitive over the modes of $p(x)\propto \exp(E(x))$.
\end{assum}

\begin{defn}\label{def:ref}
  The {\em reference} log density $E_R(s)$ is defined by averaging over the group orbit
  \begin{equation}
    E_R(s) := \frac{1}{|G|} \sum_{g\in G} E(gs). \label{eq:Ers}
  \end{equation}
  The corresponding reference distribution is $p_R(s) \propto \exp(E_R(s))$.
\end{defn}

Three properties of $E_R(s)$ are important for the rest of the paper.
\begin{itemize}
\item $E_R(s)$ is symmetric under $G$, i.e.,
  \[
  E_R(s) = E_R(gs).
  \]
  This is because $E_R(s) = \frac{1}{|G|} \sum_h E(hs) = \frac{1}{|G|} \sum_h E(h\cdot gs) = E_R(gs)$, due to the orbit averaging.
\item $E_R(s)$ and $E(s)$ are close:
  \[
  E_R(s) \approx E(s)
  \]
  and this follows from averaging \eqref{eq:EgsappEs}.
\item $G$ also acts transitively on the modes of $E_R(s)$.
\end{itemize}

With these three properties, the reference log density $E_R(s)$ can help the sampling from the multimodal distribution $p(s)\propto \exp E(s)$. The following two subsections describe how this can done in two continuation methods (annealed importance sampling and tempered distribution).

\subsection{Annealed importance sampling}

\subsubsection{Existing method}

In annealed importance sampling \cite{neal2001annealed} (AIS), one has $E(s) = -\beta U(s)$ where $\beta$ is the target inverse temperature and $U(s)$ is the potential. The method introduces the inverse-temperature sequence:
\[
0 \approx \beta_0 < \beta_1 < \ldots < \beta_{L-1} < \beta_L = \beta.
\]
Each level $l$ is associated with a distribution $p_l(s) \propto \exp(-\beta_l U(s))$ at inverse temperature $\beta_l$.

Since $\beta_0\approx 0$, one assumes that $p_0(s) \propto \exp(-\beta_0 U(s))$ is easy-to-sample. We further assume that, at each $\beta_l$, $p_l(s)$ associated with a detailed balanced Markov Chain with a transition kernel $T_l(s,t)$, i.e.,
\begin{equation}
  p_l(s) T_l(s,t) = p_l(t) T_l(t,s).  \label{eq:db}
\end{equation}

Given these building blocks, AIS computes independent weighted samples, one by one, from the following procedure.
\begin{enumerate}
\item Sample a configuration $s_{1/2}$ from $p_0(s)\propto \exp(-\beta_0 U(s))$.
\item For $l=1,\ldots,L-1$, take one step (or a few steps) of $T_l(\cdot,\cdot)$ (associated with the distributions $\beta_l$) from $s_{l-1/2}$ to $s_{l+1/2}$.
\item Set $s := s_{L-1/2}$.
\item Compute the weight
  \[
  w := \frac{p_1(s_{1/2})}{p_0(s_{1/2})} \cdots \frac{p_L(s_{1/2})}{p_{L-1}(s_{1/2})} \propto
  \frac{\exp(-\beta_1 U(s_{1/2}))}{\exp(-\beta_0 U(s_{1/2}))}  \cdot  \frac{\exp(-\beta_L U(s_{L-1/2}))}{\exp(-\beta_{L-1} U(s_{L-1/2}))}.
  \]
\end{enumerate}
The normalization constant hidden in $\propto$ is the same for all independent samples from AIS and hence can be ignored.

\begin{claim}\label{cl:ais}
  The configuration $s$ with weight $w$ samples the target distribution $p(s) \propto \exp E_L(s) =\exp E(s)$.
\end{claim}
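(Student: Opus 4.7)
My plan is to reinterpret the AIS procedure as an importance sampling scheme on the extended trajectory space $(s_{1/2}, s_{3/2}, \ldots, s_{L-1/2})$. The sampler in steps (1)--(3) produces a draw from the forward joint density
\[
q(s_{1/2},\ldots,s_{L-1/2}) = p_0(s_{1/2})\prod_{l=1}^{L-1} T_l(s_{l-1/2}, s_{l+1/2}).
\]
I then want to compare this to a ``reversed'' target joint density on the same trajectory space whose marginal at the last coordinate equals $p_L$; the claim will follow if the weight $w$ of step (4) coincides with the Radon--Nikodym derivative of that target against $q$.

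Concretely, I would define
\[
\tilde{p}(s_{1/2},\ldots,s_{L-1/2}) := p_L(s_{L-1/2})\prod_{l=1}^{L-1} \tilde{T}_l(s_{l+1/2},s_{l-1/2}),
\]
where $\tilde{T}_l(s,t) := p_l(t) T_l(t,s)/p_l(s)$ is the $p_l$-reverse of $T_l$. The detailed balance assumption \eqref{eq:db} guarantees that $\tilde{T}_l$ is a Markov kernel (in fact $\tilde{T}_l = T_l$), and by construction the $s_{L-1/2}$-marginal of $\tilde{p}$ is exactly $p_L = p$. Next I would form the ratio $\tilde{p}/q$: substituting the definition of $\tilde{T}_l$ cancels every $T_l$ factor, leaving
\[
\frac{\tilde{p}}{q} = \frac{p_L(s_{L-1/2})}{p_0(s_{1/2})} \prod_{l=1}^{L-1} \frac{p_l(s_{l-1/2})}{p_l(s_{l+1/2})}.
\]
Regrouping the factors by the configuration at which each density is evaluated, the $p_l(s_{l+1/2})$ in one term pairs with $p_{l+1}(s_{l+1/2})$ from the next, producing
\[
\frac{\tilde{p}}{q} = \prod_{l=1}^{L} \frac{p_l(s_{l-1/2})}{p_{l-1}(s_{l-1/2})} = w.
\]

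With this identification, for any bounded test function $f$ on $S$ we get $\mathbb{E}_q[w\,f(s_{L-1/2})] = \mathbb{E}_{\tilde{p}}[f(s_{L-1/2})] = \mathbb{E}_{p}[f]$, which is precisely the weighted-sample property asserted by the claim. I expect the only nontrivial step to be the telescoping bookkeeping in the last display; detailed balance does the actual work of eliminating the transition kernels from the ratio, and the remaining manipulations are definitional.
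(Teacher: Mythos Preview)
Your proposal is correct and is essentially the same argument as the paper's: both show that the forward path density times $w$ equals the backward path density $p_L(s_{L-1/2})\prod_{l} T_l(s_{l+1/2},s_{l-1/2})$, and then marginalize. Your version simply packages this more explicitly as extended-space importance sampling with a reversal kernel $\tilde{T}_l$, while the paper carries out the same telescoping inline.
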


\begin{proof}
  To see this, consider the path $(s_{1/2}, \ldots, s_{L-1/2})$. This path is generated with probability
\[
p_0(s_{1/2}) T_1(s_{1/2},s_{3/2}) \cdots T_{L-1}(s_{L-3/2},s_{L-1/2}).
\]
Multiplying this with $w$ and using the detailed balance \eqref{eq:db} of $T_l$ gives
\begin{align*}
  & \, p_0(s_{1/2}) T_1(s_{1/2},s_{3/2}) \cdots T_{L-1}(s_{L-3/2},s_{L-1/2}) \cdot \frac{p_1(s_{1/2})}{p_0(s_{1/2})} \cdots \frac{p_L(s_{L-1/2})}{p_{L-1}(s_{L-1/2})}\\
  = & \, p_L(s_{L-1/2}) T_{L-1}(s_{L-1/2},s_{L-3/2}) \cdots T_1(s_{3/2},s_{1/2}),
\end{align*}
which is the probability of going backward, i.e., starting from a sample $s_{L-1/2}$ of $p_L(\cdot) \equiv p(\cdot)$. Taking the margin of the last slot $s_{L-1/2}$ proves that $s:=s_{L-1/2}$ with weight $w$ samples the distribution $p_L(s)\equiv p(s)$.
\end{proof}

\subsubsection{Extension using symmetric reference}\label{sec:pmais}

The proposed method does not change the $\beta$ value. Instead, we continue between $E(s)$ and the symmetric reference $E_R(s)$.

Let $c(t):[0,1]\rightarrow[0,1]$ be a strictly monotone function with $c(0)=0$ and $c(1)=1$. Define $E_l(s)$ for $0\le l \le L$:
\[
E_l(s) = (1-c(l/L)) E_R(s) + c(l/L) E(s).
\]
In the simplest case, $c(t)=t$. This sequence goes from the reference distribution $E_0(s)=E_R(s)$ to the target one $E_L(s)=E(s)$. Since $E_R(s)\approx E(s)$, the discretization length $L$ can be small.

Since $E_R(s)$ is symmetric and $G$ acts transitively on its modes, it can be sampled efficiently. At each $E_l(s)$, we assume that $p_l(s) \propto \exp(E_l(s))$ is associated with a detailed balanced Markov Chain $T_l(s,t)$.

Given these building blocks, the proposed method computes independent weighted samples from the following procedure.
\begin{enumerate}
\item Sample a configuration $s_{1/2}$ from $E_0(\cdot)=E_R(\cdot)$. This can be done efficiently since $E_R(\cdot)$ is symmetric.
\item For $l=1,\ldots,L-1$, take one step (or a few steps) of $T_l(\cdot,\cdot)$ (associated with the log density $E_l(\cdot)$) from $s_{l-1/2}$ to $s_{l+1/2}$.
\item Set $s := s_{L-1/2}$.
\item Compute the weight
  \[
  w := \frac{p_1(s_{1/2})}{p_0(s_{1/2})} \cdots \frac{p_L(s_{1/2})}{p_{L-1}(s_{1/2})} \propto
  \frac{\exp(E_1(s_{1/2}))}{\exp(E_0(s_{1/2}))} \cdots \frac{\exp(E_L(s_{L-1/2}))}{\exp(E_{L-1}(s_{L-1/2}))}.
  \]
\end{enumerate}
The justification for its correctness is the same as Claim \ref{cl:ais}.

\subsection{Tempered transition}

\subsubsection{Existing method}

In tempered transition \cite{neal1996sampling} (TT), one introduces the following sequence of inverse temperatures:
\[
\beta = \beta_0 > \ldots > \beta_L \approx 0 < \ldots < \beta_{2L} = \beta,
\]
with $\beta_l = \beta_{2L-l}$ for $0\le l \le L$.

Since $\beta_L\approx 0$, one assumes that $p_L(s) \propto \exp(-\beta_L U(s))$ can be mixed rapidly. At each $\beta_l$, $p_l(s) \propto \exp(-\beta_l U(s))$ is again associated with a detailed balanced Markov Chain with a kernel $T_l(s,t)$. In addition, $T_l(s,t) = T_{2L-l}(s,t)$.

The tempered transition (TT) combines the usual local moves at $\beta$ with the following more sophisticated one.
\begin{enumerate}
\item Start from a configuration $s$ at $\beta$. Set $s_{1/2}=s$.
\item For $l=1,\ldots,L-1$, take one step (or a few steps) of $T_l(\cdot,\cdot)$ (associated with the distributions at $\beta_l$) from $s_{l-1/2}$ to $s_{l+1/2}$.
\item At $l=L$, take one step (or a few steps) of $T_L(\cdot,\cdot)$ (associated with the distributions $\beta_L$) from $s_{L-1/2}$ to $s_{L+1/2}$. Since $\beta_L \approx 0$, this step can move efficiently between different modes.
\item For $l=L+1,\ldots,2L-1$, take one step (or a few steps) of $T_l(\cdot,\cdot)$ (associated with the distributions $\beta_l$) from $s_{l-1/2}$ to $s_{l+1/2}$.
\item Set $t := s_{2L-1/2}$.
\item Compute the amplitude
  \begin{align*}
    A  & = \min \left(1, \frac{p_1(s_{1/2})}{p_0(s_{1/2})} \cdots \frac{p_{2L}(s_{2L-1/2})}{p_{2L-1}(s_{2L-1/2})} \right)\\
    & = \min \left(1,\frac{\exp(-\beta_1 U(s_{1/2}))}{\exp(-\beta_0 U(s_{1/2}))}  \cdots  \frac{\exp(-\beta_{2L} U(s_{2L-1/2}))}{\exp(-\beta_{2L-1} U(s_{2L-1/2}))} \right),
  \end{align*}
  where the normalization constants cancel out.
\item Sample $\alpha$ uniformly from $[0,1]$. If $\alpha<A$, set $s=t$. Otherwise, keep $s$ unchanged. 
\end{enumerate}

\begin{claim}\label{cl:tt}
  The tempered transition satisfies the detailed balance.
\end{claim}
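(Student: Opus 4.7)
The plan is to verify detailed balance path by path and then sum. Let $K$ denote the overall TT kernel, including both the accept and reject branches. Fix a path $\gamma=(s_{1/2},\ldots,s_{2L-1/2})$ with $s_{1/2}=s$ and $s_{2L-1/2}=t$, write its forward proposal probability as $P_f(\gamma\mid s)=\prod_{l=1}^{2L-1}T_l(s_{l-1/2},s_{l+1/2})$ and its acceptance as $A(\gamma)=\min(1,r(\gamma))$ with $r(\gamma)=\prod_{l=1}^{2L} p_l(s_{l-1/2})/p_{l-1}(s_{l-1/2})$. Let $\gamma'$ denote the time-reversed path, which visits the same intermediate points in the reverse order. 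Since the reject branch deposits all its mass on the diagonal $t=s$ and trivially satisfies detailed balance, it suffices to establish
\[
p(s)\,P_f(\gamma\mid s)\,A(\gamma) \;=\; p(t)\,P_f(\gamma'\mid t)\,A(\gamma')
\]
for each $\gamma$, and then sum using the fact that $\gamma\mapsto\gamma'$ is a bijection on paths with prescribed endpoints.

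The first step is to prove the pre-acceptance identity
\[
p(s)\,P_f(\gamma\mid s)\,r(\gamma) \;=\; p(t)\,P_f(\gamma'\mid t).
\]
Applying the detailed balance $p_l(s_{l-1/2})T_l(s_{l-1/2},s_{l+1/2})=p_l(s_{l+1/2})T_l(s_{l+1/2},s_{l-1/2})$ to each factor of $P_f(\gamma\mid s)$ rewrites it as a product of reversed kernels $T_l(s_{l+1/2},s_{l-1/2})$ times a density-ratio prefactor. Invoking the TT symmetry $T_l=T_{2L-l}$ (equivalently $p_l=p_{2L-l}$) together with the reindexing $k=2L-l$ turns this reversed product into exactly $P_f(\gamma'\mid t)$. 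What remains is a bookkeeping check: the accumulated density-ratio prefactor, multiplied with $p_0(s_{1/2})$ and with $r(\gamma)$, telescopes—using $p_0=p_{2L}$ at the endpoint—to precisely $p_0(s_{2L-1/2})=p(t)$.

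Given the pre-acceptance identity, the acceptance step is the standard Metropolis--Hastings argument. A symmetric application of the identity (swapping the roles of $\gamma$ and $\gamma'$) gives $r(\gamma')=1/r(\gamma)$. Hence when $r(\gamma)\le 1$ one has $A(\gamma)=r(\gamma)$ and $A(\gamma')=1$, and the pre-acceptance identity immediately yields the per-path detailed balance; the case $r(\gamma)>1$ is symmetric. Summing over intermediate states and adding back the reject branch yields $p(s)K(s,t)=p(t)K(t,s)$.

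The main obstacle is the index bookkeeping inside the telescoping step: subscripts on $p_l$ refer to distributions while subscripts on $s_{l-1/2}$ refer to time slices, and reversal affects both but in different ways. A clean independent sanity check on the algebra is the identity $r(\gamma')=1/r(\gamma)$, which follows directly from $p_l=p_{2L-l}$ by a change of summation variable and confirms that the reindexing has been done consistently.
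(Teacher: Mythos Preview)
Your argument is correct and follows essentially the same route as the paper: compute the ratio of the backward-path flux $p_{2L}(s_{2L-1/2})\prod T_l(\text{reversed})$ to the forward-path flux $p_0(s_{1/2})\prod T_l(\text{forward})$ using per-step detailed balance together with the symmetry $T_l=T_{2L-l}$, observe that this ratio equals $r(\gamma)$, and then invoke the standard Metropolis--Hastings conclusion. Your write-up is more explicit than the paper's (you spell out the reject branch, the bijection $\gamma\mapsto\gamma'$, and the identity $r(\gamma')=1/r(\gamma)$), but the underlying idea is identical.
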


\begin{proof}
Let us consider the total transition between $s$ and $t$. Each transition is implemented with a path $s_{1/2}, \ldots, s_{2L-1/2}$. Consider such a path one by one.The probability flux from $s$ to $t$ is given by
\[
p_0(s_{1/2}) T_1(s_{1/2},s_{3/2}) \cdots T_{2L-1}(s_{2L-3/2},s_{2L-1/2}),
\]
while the probability flux from $t$ to $s$ is given by
\[
\begin{aligned}
  & p_{2L}(s_{2L-1/2}) T_{1}(s_{2L-1/2},s_{2L-3/2}) \ldots T_{2L-1}(s_{1/2},s_{3/2})\\
  =&p_{2L}(s_{2L-1/2}) T_{2L-1}(s_{2L-1/2},s_{2L-3/2}) \ldots T_{1}(s_{1/2},s_{3/2}),
\end{aligned}
\]
due to the symmetry $T_l =T_{2L-l}$. The ratio of the latter over the former is
\[
\frac{p_{2L}(s_{2L-1/2}) T_{2L-1}(s_{2L-1/2},s_{2L-3/2}) \ldots T_{1}(s_{1/2},s_{3/2})}{p_0(s_{1/2}) T_1(s_{1/2},s_{3/2}) \cdots T_{2L-1}(s_{2L-3/2},s_{2L-1/2})} =
\frac{p_1(s_{1/2})}{p_0(s_{1/2})} \cdots  \frac{p_{2L}(s_{2L-1/2})}{p_{2L-1}(s_{2L-1/2})}.
\]
Therefore, the Metropolis-Hasting move described above satisfies the detailed balance.
\end{proof}

\subsubsection{Extension using symmetric reference}\label{sec:pmtt}

Instead of changing the value of $\beta$, we continue between $E(\cdot)$ and its symmetric reference $E_R(\cdot)$.

Let $c(t):[0,1]\rightarrow[0,1]$ again be a strictly monotone function with $c(0)=0$ and $c(1)=1$. Define $E_l(s)$ for $0\le l \le 2L$. For $0\le l \le L$
\[
E_l(s) = (1-c(l/L)) E(s) + c(l/L) E_R(s),
\]
and for $L \le l \le 2L$
\[
E_l(s) = E_{2L-l}(s).
\]
In the simplest case, $c(t)=t$. This sequence starts from the target distributions $E_0(s)=E(s)$, goes to the reference one $E_L(s)=E_R(s)$, and returns to $E_{2L}(s)=E(s)$. Since $E_R(s)\approx E(s)$, the discretization length $2L$ is small.

Since $E_R(s)$ is symmetric under $G$, it enjoys the group transition \eqref{eq:Tgroup} between multiple modes. At each $E_l(s)$, we assume that $p_l(s) \propto \exp(E_l(s))$ is associated with a detailed balanced Markov chain $T_l(s,t)$.

The proposed method combines the usual local moves at $E(\cdot)$ with the following more sophisticated move that takes advantage of the symmetric reference $E_R(s)$.
\begin{enumerate}
\item Start from the current configuration $s$ at $E(\cdot)$. Set $s_{1/2}=s$.
\item For $l=1,\ldots,L-1$, take one step (or a few steps) of $T_l(\cdot,\cdot)$ (associated with
  the distribution at $E_l$) from $s_{l-1/2}$ to $s_{l+1/2}$.
\item At $l=L$, take a random $g\not=e$ uniformly from $G$. Let $s_{L+1/2} = g s_{L-1/2}$ be the resulting configuration (see \eqref{eq:Tgroup}).
\item For $l=L+1,\ldots,2L-1$, take one step (or a few steps) of $T_l(\cdot,\cdot)$ (associated with the distribution at $E_l$) from $s_{l-1/2}$ to $s_{l+1/2}$.
\item Set $t := s_{2L-1/2}$.
\item Compute the amplitude
  \begin{align*}
    A  & = \min \left(1, \frac{p_1(s_{1/2})}{p_0(s_{1/2})} \cdots \frac{p_{2L}(s_{2L-1/2})}{p_{2L-1}(s_{2L-1/2})} \right)\\
    & = \min \left(1,\frac{\exp(E_1(s_{1/2}))}{\exp(E_0(s_{1/2}))}  \cdots  \frac{\exp(E_{2L}(s_{2L-1/2}))}{\exp(E_{2L-1}(s_{2L-1/2}))} \right).
  \end{align*}
\item Sample $\alpha$ uniformly from $[0,1]$. If $\alpha<A$, set $s=t$. Otherwise, keep $s$ unchanged. 
\end{enumerate}
The justification for its correctness is the same as Claim \ref{cl:tt}.

\subsection{Comments}

We have described the method for AIS and TT. The proposed approach works also for simulated tempering (ST) and parallel tempering (PT). Both ST and PT use the same continuation sequence as AIS. In principle, the approach can be combined with even the simple Metropolis-Hasting, with the transition \eqref{eq:Tgroup} used as a proposal. However, this would require $E_R(s)$ to be very close to $E(s)$ because otherwise, this would not be effective.

The traditional tempering methods utilize the high-temperature distribution as the reference (or at least as the limiting one). It is trivially symmetric and easy-to-sample. However, this high-temperature reference can be quite far from the target distribution. The main observation of our approach is that any distributions with exact symmetries are easy to sample, and there are many more of them. For a target distribution $p(s)$ with approximate symmetry, our method identifies a nearby reference distribution $p_R(s)$ with exact symmetry. The distance between $p(s)$ and $p_R(s)$ is much closer to the one between $p(s)$ and the high-temperature distribution. Therefore, the continuation path can be much shorter.

The idea of using a reference distribution without temperature adjustment has been explored before. In \cite{ying2023annealed}, for the Ising model with boundary condition, the reference distribution is set to be the one with the boundary condition turned off. Still, the distance between the target distribution and the reference one is quite significant.

\section{Examples}\label{sec:ex}

\subsection{Example 1}\label{sec:ex1}

Consider the Ising model on a square lattice of size $n\times n$. Let $I$ be the set of nodes and write $i \sim j$ when a bond links $i$ and $j$. The state space consists of all $\{\pm 1\}^I$ configurations. $\beta$ is a fixed inverse temperature. A forcing term is specified on the four sides as follows.
\[
f_i = \begin{cases}
  -1 + \frac{1}{2} N(0,1) & i\;\text{on the left/right sides,}\\
  +1 + \frac{1}{2} N(0,1) & i\;\text{on the top/bottom sides,}\\
  0 & \text{otherwise.}
\end{cases}
\]
where $N(0,1)$ stands the standard Gaussian.

Given a configuration $s=(s_i)_{i\in I}$, the log density $E(s)$ is defined via
\[
\beta^{-1} E(s) = \sum_{i\sim j} s_i s_j + \sum_i f_i s_i, \quad\text{or}\quad  \beta^{-1} E(s) = \frac{1}{2} s^T As + f^T s,
\]
where the matrix $A$ is the connectivity matrix of the lattice, , i.e., $A_{ij}=1$ iff $i\sim j$ and otherwise $0$.

The forcing term tends to pin down the spins at the left/right sides to be $-1$ and the ones at the top/bottom sides to be $+1$. An essential feature of this Ising model is that the distribution exhibits macroscopically different profiles below the critical temperature. The two dominant macroscopic profiles are a $-1$ cluster linking the left/right sides and a $+1$ cluster linking the top/bottom sides, as shown in Figure \ref{fig:ex1}.
\begin{figure}[h!]
  \centering
  \includegraphics[scale=0.6]{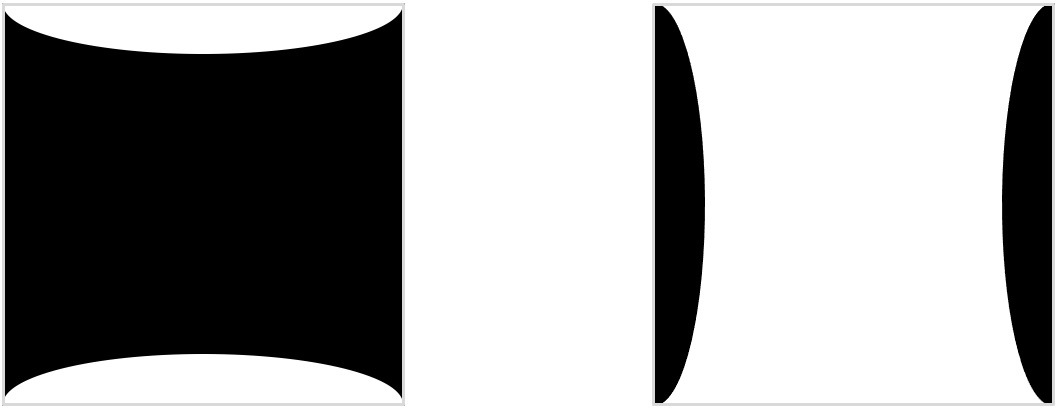}
  \caption{Two dominant macroscopic profiles of the Ising model in Example 1. White stands for $+1$ while black for $-1$. The two dominant macroscopic profiles are a $-1$ cluster linking the left/right sides and a $+1$ cluster linking the top/bottom sides.}
  \label{fig:ex1}
\end{figure}

\subsubsection{Identifying approximate symmetry}
Due to the randomness in $f$, there is no exact symmetry. However, there is an approximate symmetry. Let $G=\{e,g\}$ be the group acting on the nodes $I$, where $g$ acts on a node $i$ by flipping along the $45$-degree line. The key observation is that, for a boundary node $i\in I$,
\begin{equation}
  f_{gi} \approx - f_i. \label{eq:ex1f}
\end{equation}

Based on this group action on the nodes, we lift the action to the configurations: for any configuration $s$,
\[
(es)_i = s_i, \quad (gs)_{gi} = -s_i.
\]
The group element $e$ does not change the configuration as expected. The element $g$ flips the lattice nodes along the 45-degree line and then the spin value. The minus sign is essential since it would otherwise not match the forcing term applied on the boundary nodes. We refer to the action of $g$ as the {\em double-flip}, following \cite{ying2022double}.

The claim is $E(gs) \approx E(s)$. To show this, 
\[
\begin{aligned}
  \beta^{-1} E(gs) & = \sum_{i\sim j} (gs)_i (gs)_j + \sum_i f_i (gs)_i = \sum_{gi \sim gj} (gs)_{gi} (gs)_{gj} + \sum_i f_{gi} (gs)_{gi}\\
  & \approx \sum_{i \sim j} (-s_i) (-s_j) + \sum_i (-f_i)(-s_i) = \beta^{-1} E(s),
\end{aligned}
\]
where the approximation uses \eqref{eq:ex1f}.

\begin{figure}[h!]
  \centering
  \begin{tabular}{cc}
    \includegraphics[scale=0.3]{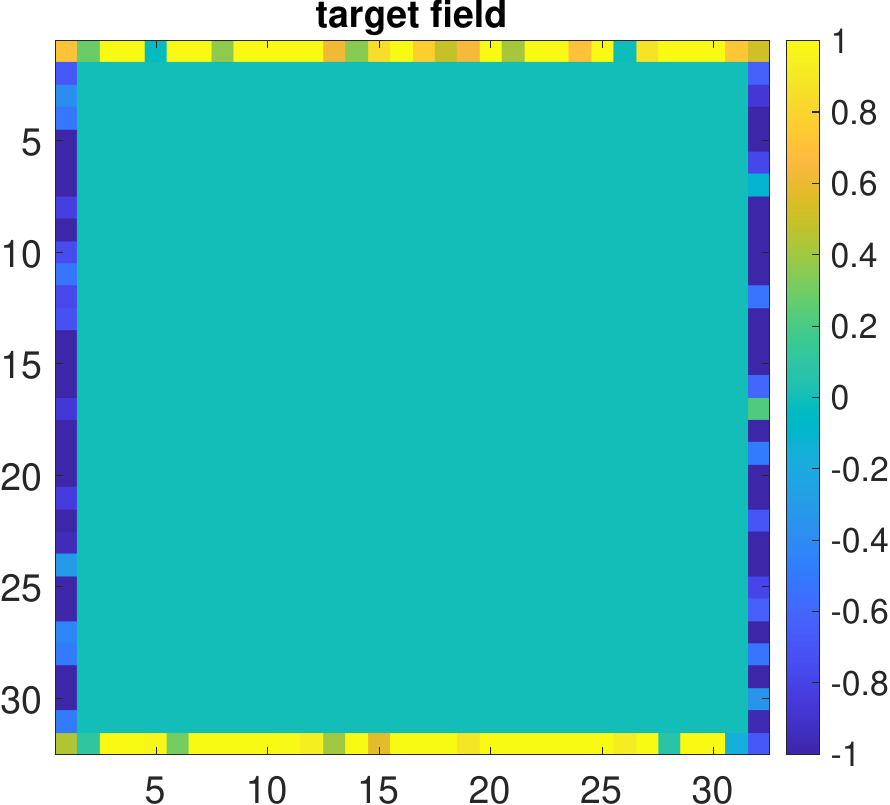} &
    \includegraphics[scale=0.3]{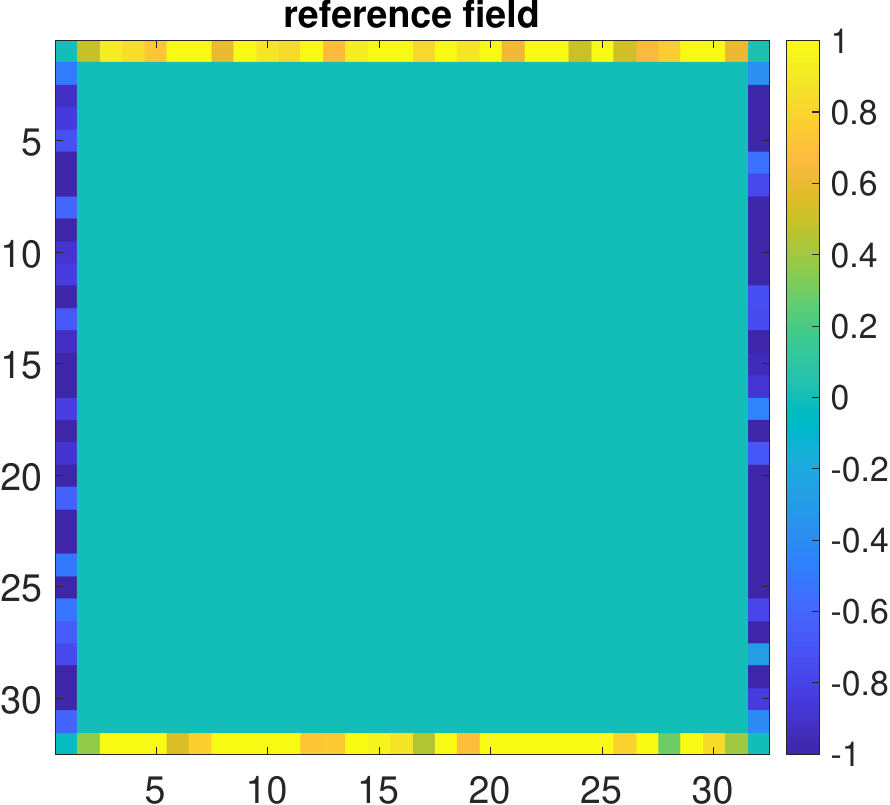} \\
    (a) & (b)
  \end{tabular}
  \caption{Example 1. (a): the forcing term $f$ of $E(s)$. (b): the forcing term $\frac{1}{2} (f-Pf)$ of $E_R(s)$.}
  \label{fig:ex1f}
\end{figure}

Based on that, the reference distribution is defined via 
\[
\beta^{-1} E_R(s) = \frac{1}{2} (E(s) + E(gs)) = \sum_{i\sim j} s_i s_j + \frac{1}{2} \sum_i (f_i - f_{gi}) s_i,
\]
or in terms of the connectivity matrix $A$
\[
\beta^{-1} E_R(s) = \frac{1}{2} s^T A s + \frac{1}{2} (f-Pf)^T s.
\]
Here, $P$ is the permutation matrix associated with the action of $g$ on the nodes. Figure \ref{fig:ex1f} gives the forcing terms: $f$ of $E(s)$ and $\frac{1}{2} (f-Pf)$ of $E_R(s)$ for $n=32$.

In what follows, we present the results at $n=32$ and $\beta=0.8$ for the extensions of AIS and TT mentioned above.

\subsubsection{Extension of AIS}\label{sec:ex1ais}

Applying the method described in Section \ref{sec:pmais} to this example requires specifying the sampling of $E_R(\cdot)$ and the transitions $T_l(\cdot,\cdot)$. Both use the Glauber dynamics done in parallel: we partition the nodes into a collection of independent subsets. These subsets are visited in a random order. The updates for all nodes in a single independent subset are performed in parallel. In what follows, we will refer to this as a {\em Glauber sweep} over the nodes.

First, in order to obtain one sample from the symmetric $E_R(\cdot)$, we choose an initial configuration uniform-randomly and then perform a few Glauber sweeps of $E_R(\cdot)$. The uniform-random initial configuration ensures that the two modes of $E_R(\cdot)$ are visited equally, while the Glauber dynamics mix rapidly in each mode \cite{gheissari2018effect}. Second, the transition $T_l(\cdot,\cdot)$ at each level $l$ is also implemented with a single Glauber sweep of $E_l(\cdot)$.

The linear function $c(t)=t$ is used to interpolate between $E_R(\cdot)$ and $E(\cdot)$, with $L=64$. A total number of $K=10000$ independent weighted samples are collected, with each denoted as $(s^{(k)},w^{(k)})$ for $1\le k \le K$.

To monitor the variance of the algorithm, we record the weight history within each level $l$:
\[
w^{(k)}_l := \frac{p_1(s^{(k)}_{1/2})}{p_0(s^{(k)}_{1/2})} \cdot \cdots \cdot \frac{p_l(s^{(k)}_{l-1/2})}{p_{l-1}(s^{(k)}_{l-1/2})}
\]
for $l=1,\ldots,L$. These weights are then normalized at each level $l$
\begin{equation}
{w}^{(k)}_l \Leftarrow {w^{(k)}_l}/\left(K^{-1} \sum_{g=1}^K w^{(g)}_l\right). \label{eq:wn}
\end{equation}
The sampling efficiency, a quantity between $0$ and $1$, is defined as $(1+\text{Var}(\{{w}^{(k)}_L\}_{k=1}^K))^{-1}$, following \cite{neal2001annealed}. 

Figure \ref{fig:ex1w} plots the logarithms of the normalized weights of each sample as a function of the level $l$. The sampling efficiency is $0.65$, which translates to $98$ Glauber sweeps per independent sample. The probabilities at the $+1$ and $-1$-dominant configurations are estimated to be $67\%$ vs $33\%$, respectively.

\begin{figure}[h!]
  \centering
  \includegraphics[scale=0.3]{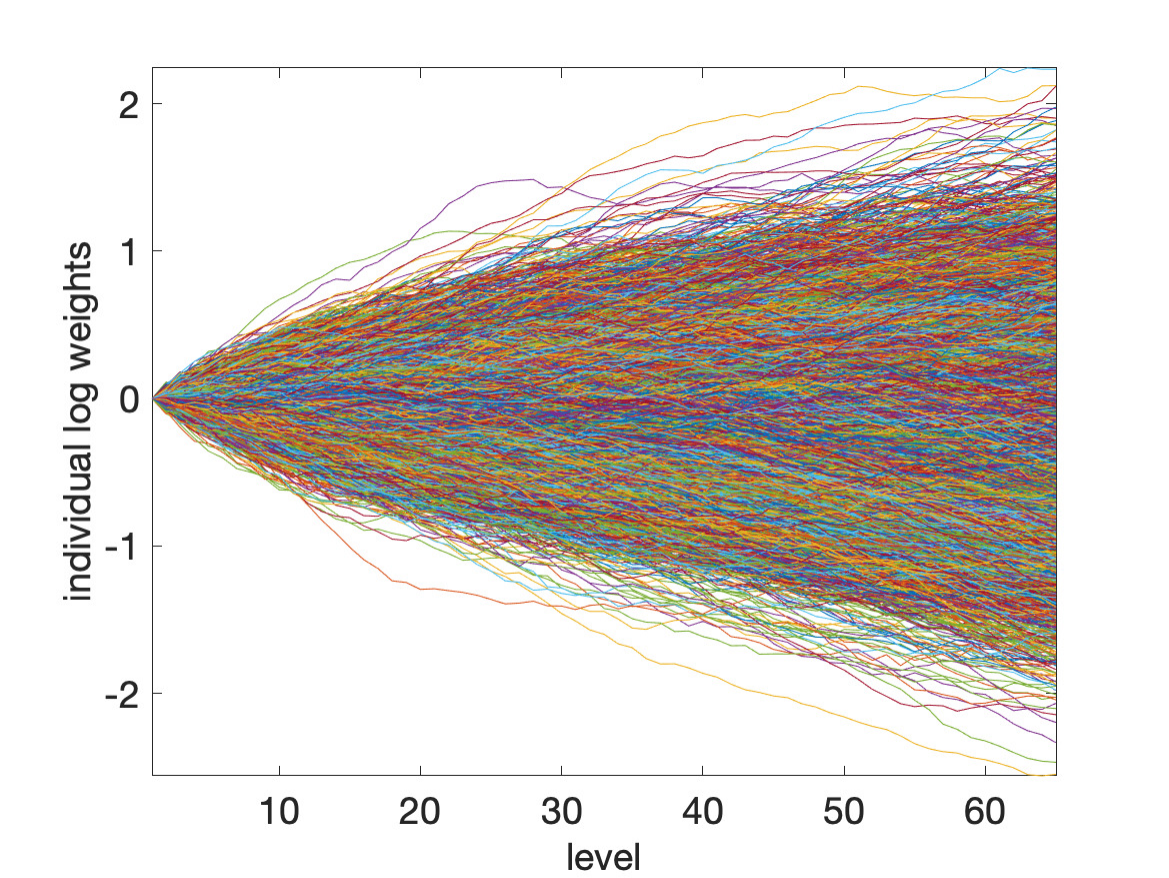}
  \caption{The logarithms of the normalized weights \eqref{eq:wn} of each sample over all levels.}
  \label{fig:ex1w}
\end{figure}

\subsubsection{Extension of TT}\label{sec:ex1tt}

The method described in Section \ref{sec:pmtt} is embedded in an MCMC run. Among the moves, $99\%$ are the (local) parallel Glauber sweeps of $E(s)$, while the other $1\%$ are TT moves described in Section \ref{sec:pmtt}. The percentage is chosen to more or less balance the work between the two parts.

For the TT move, we adopt again the linear interpolation from $E(s)$ to $E_R(s)$ and back, with $L=64$. The transition $T_l(\cdot,\cdot)$ at each level $l$ is also implemented with a single Glauber sweep of $E_l(s)$. A total of $10000$ MCMC moves are performed, and about $100$ TT moves are attempted among them.

In order to monitor the performance, we record the average spin value $ (\sum_{i\in I} s_i)/|I|$. Figure \ref{fig:ex1ave} plots this average as a function of time. There are $70$ successful transitions between the two modes, and the probabilities at $+1$ and $-1$-dominant configurations are estimated to be $65\%$ vs $35\%$, respectively.

\begin{figure}[h!]
  \centering
  \includegraphics[scale=0.3]{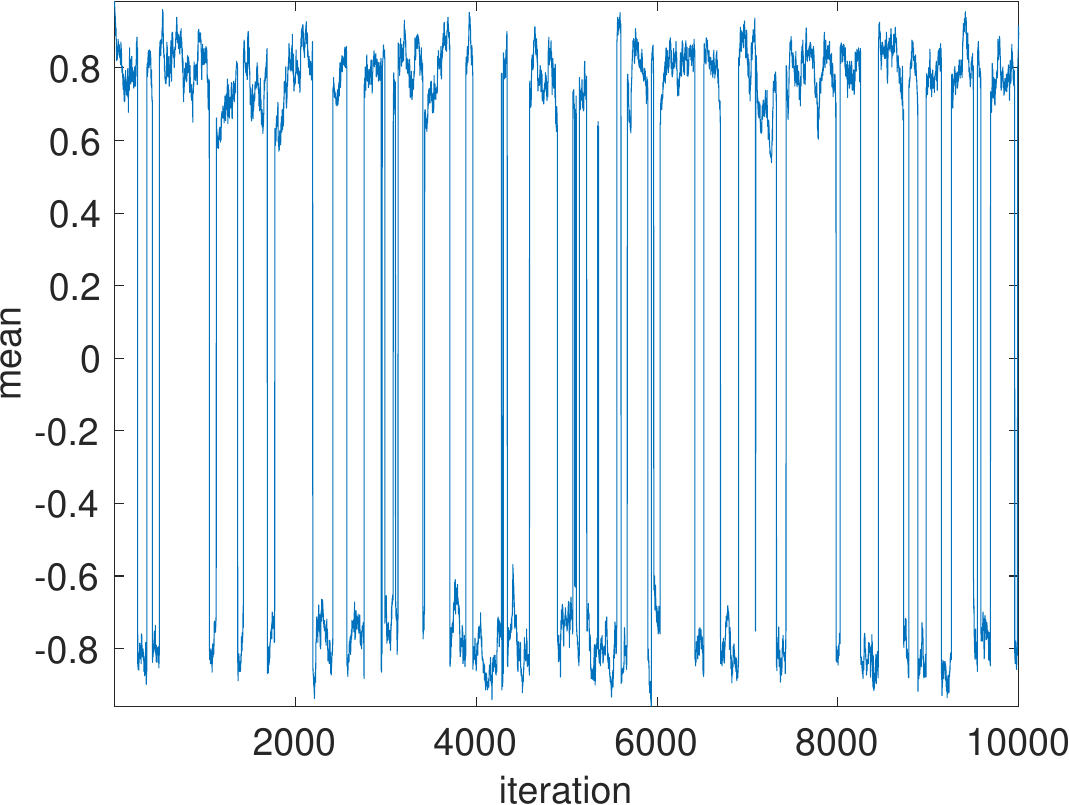}
  \caption{The average spin value along the MCMC simulation. Each jump stands for a successful TT move between the $+1$ and $-1$ dominant configurations.}
  \label{fig:ex1ave}
\end{figure}

\subsection{Example 2}\label{sec:ex2}

Consider the Ising model on a rectangular lattice of size $n_1 \times n_2$ with $n_1\not=n_2$. A forcing term is again specified on the four sides:
\[
f_i = \begin{cases}
  -1 +c, & i\; \text{on the left/right sides,}\\
  +1 +c, & i\; \text{on the top/bottom sides,}\\
  0, & \text{otherwise,}
\end{cases}
\]
where the constant $c$ is chosen so that $f_i$ has zero mean. Ensuring $\sum_i f_i=0$ helps to make both $+1$-dominant and $-1$-dominant profiles have non-negligible probabilities.

Given a configuration $s=(s_i)_{i\in I}$, the log probability $E(s)$ is defined via
\[
\beta^{-1} E(s) = \sum_{i\sim j} s_i s_j + \sum_i f_i s_i, \quad\text{or}\quad  \beta^{-1} E(s) = \frac{1}{2} s^T As + f^T s.
\]
Similar to Example 1, the distribution with the specified forcing term exhibits macroscopically different profiles below the critical temperature.

\subsubsection{Identifying approximate symmetry}

Since $n_1 \not=n_2$, there is no exact symmetry. However, there is an approximate symmetry. The group is still $G=\{e,g\}$. We start by identifying the action of $g$ on the nodes. we embed the nodes $I$ into the $[-1,1]^2$ box, with stepsize $\frac{2}{n_1-1}$ in the horizontal direction and $\frac{2}{n_2-1}$ in the vertical one. For each $i$, denote $\vec{x}_i$ as the associated geometric location and correspondingly define $\vec{y}_i$ as
\[
\vec{y}_i = \begin{bmatrix} 0 & 1\\ 1 & 0 \end{bmatrix} \vec{x}_i.
\]
The goal is to find a map $g: I \rightarrow I$ such that two conditions: $g^2 = e$ and
\begin{equation}
  \vec{y}_i \approx \vec{x}_{gi}. \label{eq:xy}
\end{equation}
The motivations are twofold. First, such $g$ would ensure $f_{gi} \approx -f_i$. Second, if $i\sim j$, then $\vec{x}_i \approx \vec{x}_j$ and $\vec{y}_i \approx \vec{y}_j$. \eqref{eq:xy} implies that $\vec{x}_{gi} \approx \vec{x}_{gj}$, which further correlates with $gi \sim gj$, i.e., the nodes $gi$ and $gj$ are linked in the Ising model. We adopt the following heuristic procedure proposed in \cite{ying2022double} to achieve these two conditions.
\begin{enumerate}
\item Order the interior vertices $\{\vec{x}_j\}_{j\in I}$ based on their distances to the origin in the
  decreasing order. The distance is typically chosen to be either the $\ell_\infty$ norm or the
  $\ell_2$ norm.
\item Mark all vertices $j\in I$ as unpaired.
\item Scan the interior vertices in this ordered list. For each $\vec{x}_j$, if $j$ is already paired, then
  skip. If not, find the unpaired $i$ such that $\vec{y}_i$ is closet to $\vec{x}_j$, pair $i$ and $j$
  \[
  gi:=j, \quad gj:=i,
  \]
  and mark both $i$ and $j$ as paired. 
\end{enumerate}
The heuristic is that, by following the order of decreasing distance to the origin, the remaining unpaired vertices are forced to cluster near the center of the domain, thus reducing the distance between them.

We then lift the action of $g$ from the nodes to the configuration $s$. Given $s$, we define $gs$ by
\[
(es)_i = s_i, \quad (gs)_{(gi)} = -s_i.
\]
The minus sign here is again essential for $g$ to be an approximate symmetry.

The claim is $E(gs) \approx E(s)$. To see this,
\[
\begin{aligned}
  \beta^{-1} E(gs) & = \sum_{i\sim j} (gs)_i (gs)_j + \sum_i f_i (gs)_i \approx \sum_{gi \sim gj} (gs)_{gi} (gs)_{gj} + \sum_{gi} f_{gi} (gs)_{gi}\\
  & \approx \sum_{i\sim j} (-s_i) (-s_j) + \sum_i (-f_i) (-s_i) = \beta^{-1} E(s),
\end{aligned}
\]
where the first approximation uses the fact that $i\sim j$ is correlated with $gi\sim gj$ and the second approximation uses $f_{gi} \approx -f_i$.

Based on that, the reference distribution $E_R(s)$ is defined as 
\[
\beta^{-1} E_R(s) = \frac{1}{2} \left(\sum_{ij} s_i s_j + \sum_i f_i s_i \right) + \frac{1}{2} \left( \sum_{ij} (gs)_i (gs)_j + \sum_i f_i (gs)_i \right).
\]
or in terms of the connectivity matrix $A$
\[
\beta^{-1} E_R(s) = \frac{1}{2} s^T (A + P^T A P) s + \frac{1}{2}(f-Pf)^T s.
\]
Here, $P$ is again the permutation matrix associated with the action of $g$ on the nodes. Figure \ref{fig:ex2Af} plots the $g$ action between the nodes, the connectivity of $E_R(s)$, and the forcing terms of $E(s)$ and $E_R(s)$, for $n_1=30$, $n_2=32$.  Notice that in this example, $P^T A P$ differs from $A$; therefore, the connectivity of $E_R(s)$ has more bonds.

In what follows, we present the results for $n_1 = 30$, $n_2=32$, and $\beta = 0.8$ for the extensions of AIS and TT.

\begin{figure}
  \centering
  \begin{tabular}{cc}
    \includegraphics[scale=0.3]{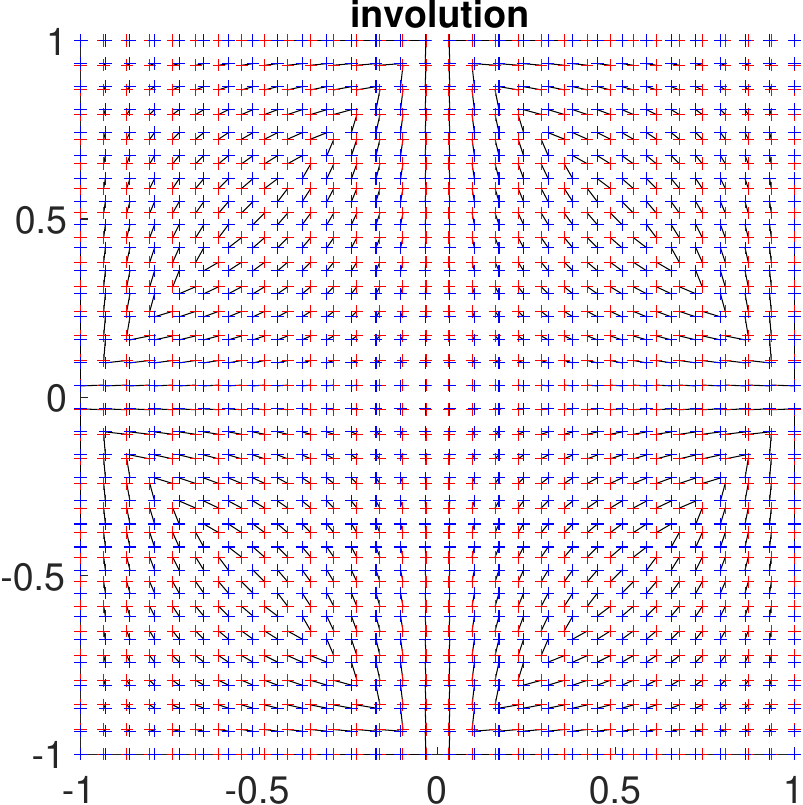} &    \includegraphics[scale=0.3]{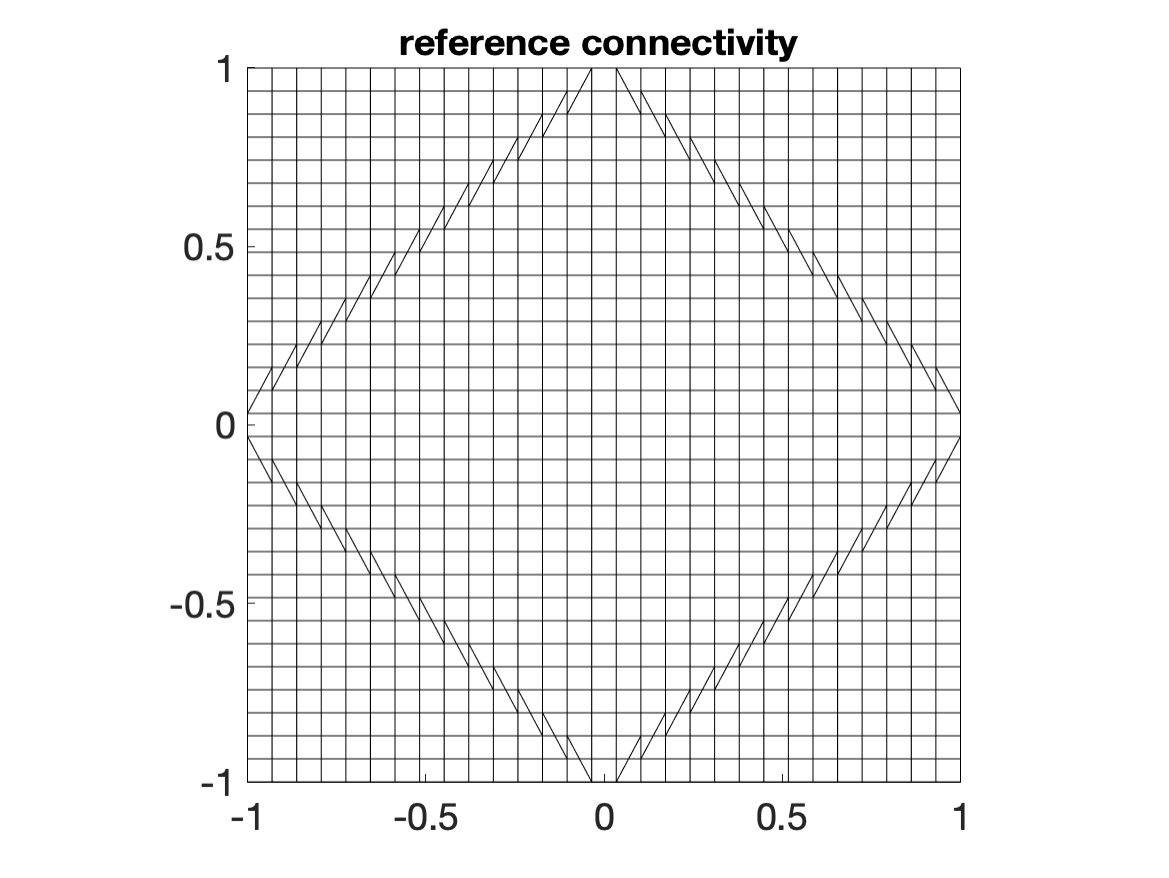} \\
    (a) & (b) \\
    \includegraphics[scale=0.3]{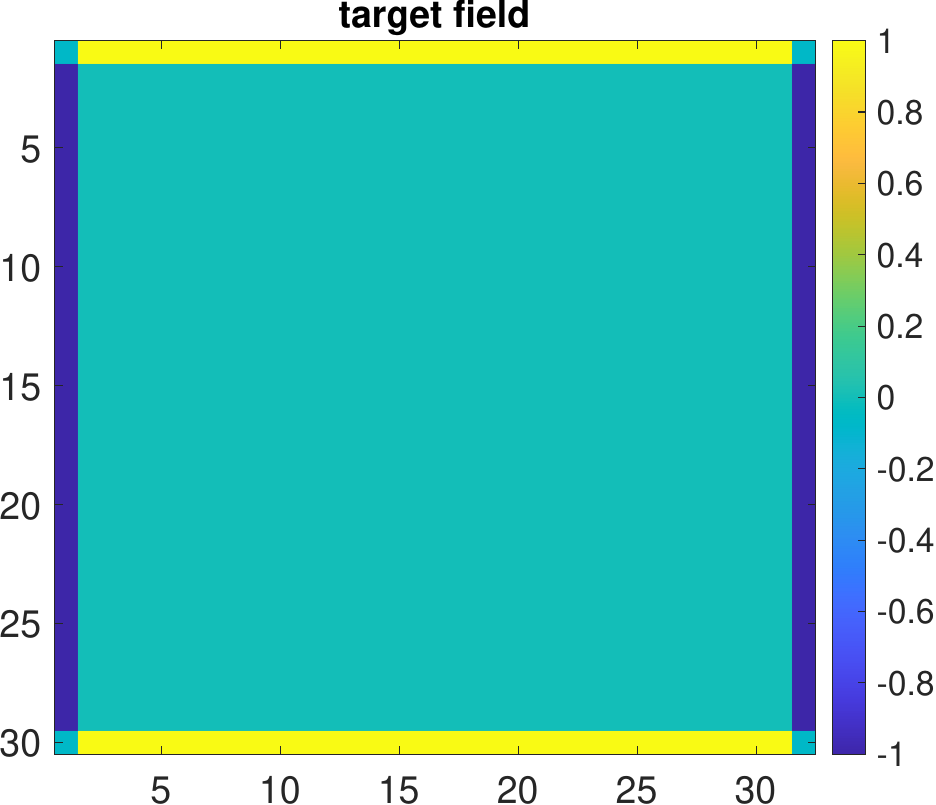} &    \includegraphics[scale=0.3]{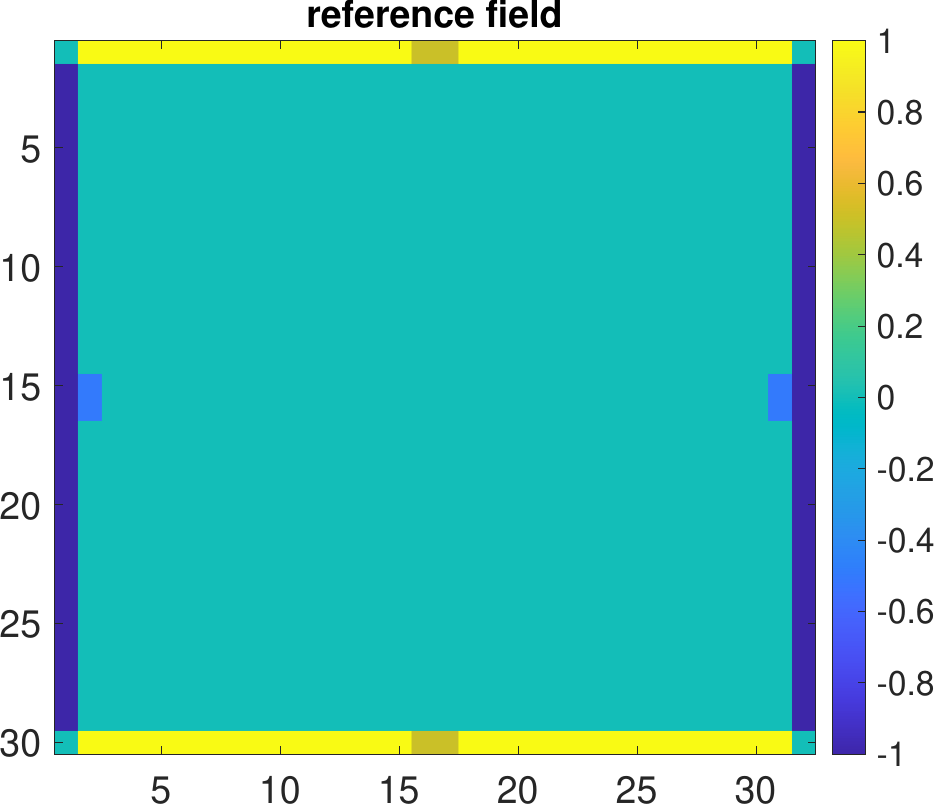} \\
    (c) & (d) 
  \end{tabular}
  \caption{Example 2.  (a): the map $g:I \rightarrow I$. The points $\{\vec{x}_i\}$ and $\{\vec{y}_i\}$ are marked in blue and red, respectively.  A black segment is linked between $\vec{y}_i$ and $\vec{x}_j$ if and only if $gj=i$ and $gi=j$.  (b): the connectivity pattern of $\frac{1}{2}(A + P^T AP)$. Notice that it contains a few more edges than the connectivity pattern of $A$ due to the averaging over the group orbit.  (c): the forcing term $f$ of $E(s)$. (d): the forcing term $\frac{1}{2}(f-Pf)$ of $E_R(s)$.}
  \label{fig:ex2Af}
\end{figure}

\subsubsection{Extension of AIS}\label{sec:ex2ais}

We adopt a linear interpolation between $E_R(\cdot)$ and $E(\cdot)$, with $L=64$. The choice of the initial configuration and the transition $T_l(\cdot,\cdot)$ are the same as Section \ref{sec:ex1ais}. A total of $10000$ samples are collected in this test.

Figure \ref{fig:ex2w} gives the logarithm of the normalized weights for each sample across all levels. The sampling efficiency is $0.49$, which translates to $129$ Glauber sweeps per independent sample. The probabilities at $+1$ and $-1$-dominant configurations are estimated to be $98\%$ vs $2\%$. This difference between the probability masses of the two modes originates from the slight discrepancy between $n_1$ and $n_2$ but is enhanced by the relatively large $\beta$ value. About half of the samples are devoted to the $98\%$ $+1$-dominant configurations and the other half to the $2\%$ $-1$-dominant ones. Given prior knowledge about this imbalance, this inefficiency can be improved by lowering the probability but increasing the weights for the initial samples obtained from $E_R(s)$.

\begin{figure}[h!]
  \centering
  \includegraphics[scale=0.3]{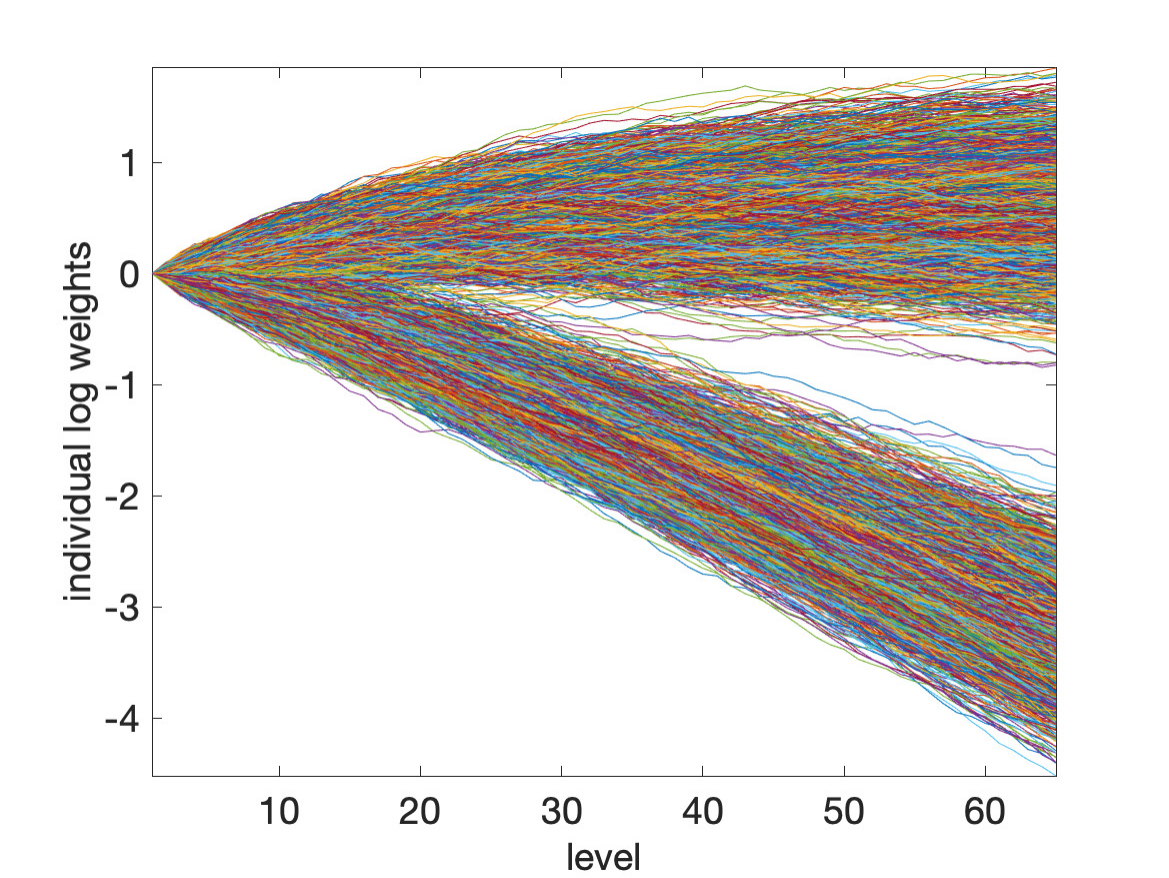}
  \caption{The logarithms of the normalized weights \eqref{eq:wn} of each sample over all levels.}
  \label{fig:ex2w}
\end{figure}

\subsubsection{Extension of TT}\label{sec:ex2tt}

The method described in Section \ref{sec:pmtt} is embedded in an MCMC run. Among the moves, $99\%$ are the parallel Glauber sweeps of $E(s)$, while the remaining $1\%$ are TT moves.

We use the linear interpolation for the TT move from $E(\cdot)$ to $E_R(\cdot)$ and back, with $L=128$. The transition $T_l(\cdot,\cdot)$ at each level $l$ is implemented also with a Glauber sweep of $E_l(\cdot)$.  A total of $10000$ moves are carried out, with about $100$ TT moves being attempted.

Figure \ref{fig:ex2ave} plots the average spin value as a function of time. There are $36$ successful transitions between the modes, and the probabilities at $+1$ and $-1$-dominant configurations are estimated to be $98\%$ vs $2\%$.

\begin{figure}[h!]
  \centering
  \includegraphics[scale=0.3]{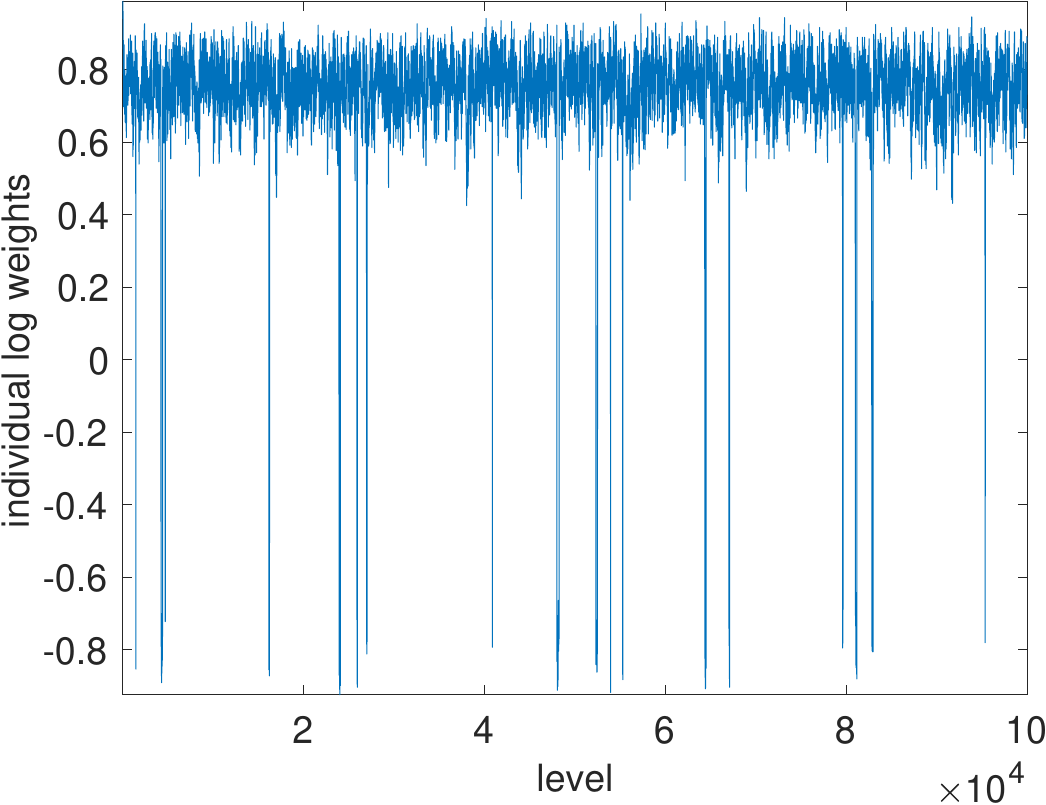}
  \caption{The average spin value along the MCMC simulation. Each jump stands for a successful transition between the $+1$ and $-1$ dominant configurations.}
  \label{fig:ex2ave}
\end{figure}

\section{Discussion}\label{sec:disc}

This paper proposed a new approach for sampling multimodal distributions. The main requirement is the identification of an approximate symmetry of the target distribution. Once the approximate symmetry is identified, we construct a nearby symmetric reference distribution and combine it with two continuation methods, AIS and TT. This framework can also be combined with simulated tempering (ST) and parallel tempering (PT). This is a clear direction for future work.

This paper considered only discrete state space. The method should also work for distributions equipped with an approximate symmetry over a continuous state space. The proposed method only requires a little change when the group is discrete. When the group is a Lie group, one might need to consider averaging the group action over the Haar measure for the definition of $E_R(s)$, which is a direction for future work.

\bibliographystyle{abbrv}

\bibliography{ref}

\end{document}